\documentclass[11pt]{elsarticle}
\usepackage[all,pdf]{xy}

\usepackage{amsmath,amssymb,amsthm}
\usepackage{hyperref}
\usepackage{mathrsfs}
\usepackage{graphicx}
\usepackage[all,pdf]{xy}
\usepackage{tikz}
\usetikzlibrary{cd}
\usepackage{titlesec}

\theoremstyle{plain}
\newtheorem{theorem}{Theorem}[section]
\newtheorem{lemma}[theorem]{Lemma}

\newtheorem{corollary}[theorem]{Corollary}

\theoremstyle{definition}
\newtheorem{definition}[theorem]{Definition}

\newcommand{\rom}[1]{\rm{\uppercase\expandafter{\romannumeral #1}}}

\makeatletter
\def\ps@pprintTitle{%
  \let\@oddhead\@empty
  \let\@evenhead\@empty
  \def\@oddfoot{\reset@font\hfil\thepage\hfil}
  \let\@evenfoot\@oddfoot
}
\makeatother

\begin{document}

\begin{frontmatter}

\title{Stone duality of Lawson compact algebraic L-domain}
\tnotetext[t1]{This work is supported by  the Natural Science Foundation of Shandong Province, China (No. ZR2024QA195) and the Natural Science Foundation of China (No.12501645)}
\author{Huijun Hou}
\address{School of Mathematical Sciences, Qufu Normal University, Qufu, Shandong, 273165, China}
\ead{houhuijun2021@163.com}
\author{Ao Shen\corref{a1}}
\address{School of Mathematical Science, Tiangong University, Tianjin, 300387, China}
\cortext[a1]{Corresponding author.}
\ead{shenao2020@163.com}
\begin{abstract}
In this paper, a subclass of bounded distributive lattices, that is, finitely disjunctive distributive lattices (FDD-lattices) have been introduced. Then we apply it to establish a Stone duality for Lawson compact algebraic L-domains. Furthermore, we develop a dual equivalence between the category of  $FDD$-lattices with lattice homomorphisms and that of Lawson compact algebraic $L$-domains with  spectral maps.

\end{abstract}

\begin{keyword}
 Finitely disjunctive distributive lattice \sep Lawson compact algebraic $L$-domain \sep Stone duality  
\end{keyword}
\end{frontmatter}

\section{Introduction}
Duality theory was originated by Stone \cite{Stone1936} who first connected topological spaces and algebras. He developed the dual equivalence between  the category of Boolean algebras with Boolean homomorphisms and that of Stone spaces with continuous maps. Based on the duality, he \cite{Stone1937}  obtained the Stone's 
generalization of the Weierstrass approximation theorem and Stone-\v{C}ech compactification. After that, Stone \cite{Stone1938} extended this duality from Boolean algebras to bounded distributive lattices by  spectral spaces.  Concretely, he showed that the set $Spec(L)$ of prime filters of a bounded distributive lattice $L$ endowed with hull-kernel topology is a spectral space; and conversely, the set $CO(X)$ of clopen sets of a spectral space $X$ is a bounded distributive lattice. 
The Stone duality has been generalized to many ordered structures, for instance, to posets  \cite{Gonzalez2016}, to  distributive posets  \cite{David1992}, to  semilattices \cite{Celani2020}, to  bounded lattices \cite{Urquhart1978}, to  distributive join-semilattices \cite{Gratzer1978} and so on. 

Indeed, the Stone duality for bounded distributive lattice has a close relation with domain theory. Combining with  the one-to-one correspondence between  bounded distributive lattices and arithmetic frames with 1 being a compact element,  a special case of Nachbin's result \cite{Nachbin1949} that the one-to-one correspondence between  semilattices and algebraic lattices, we know that the category of arithmetic frames with 1 being a compact element is dually equivalent to that of spectral spaces. (See \cite[Exercise  \uppercase\expandafter{\romannumeral5}-5.22]{Gierz2003}). Moreover, Isbell \cite{Isbell1972} constructed the dual equivalence between spatial frames and sober spaces. Hofmann and Lawson \cite{Hofmann1978} showed that the category of continuous frames with frame homomorphisms is dually equivalent to that of locally compact sober spaces with continuous maps. Recently, Shen and Li \cite{Shen2024}  provided a Stone duality for continuous lattices.

The Jung-Tix problem \cite[]{Jung1998}, a central question in domain theory,  asks how to find  a subcategory consisting of dcpos and Scott
continuous maps that is cartesian closed
and closed under the probabilistic powerdomain construction?
In recent years, numerous scholars have proposed various approaches aiming to tackle this problem, among which the Lawson compactness has played a significant role. This paper will investigate Lawson compact algebraic $L$-domains and  present a Stone duality for them. Concretely, we introduce finitely disjunctive distributive lattices. Then we show that there is a one-to-one correspondence between  $FDD$-lattices and  Lawson compact algebraic $L$-domains. Focusing on the category of  $FDD$-lattices with lattice homomorphisms, we find the category of 
Lawson compact algebraic $L$-domains with spectral maps and obtain a dual equivalence between them.


\section{Preliminaries}

In this section, we will give a brief introduction to the concepts and notions of topology and domain theory that required in this paper.
One can  refer to \cite{Gierz2003} for more details.

Given a poset $P$, $A\subseteq P$ is an \emph{upper set} if $A = {\uparrow} A$, where ${\uparrow} A = \{x\in P: x\geq a\  \mathrm{for\  some}\ a\in A\}$. The  \emph{lower set} is defined dually. The subset $D$ of $P$ is \emph {directed}  if every finite subset  of $D$ has an upper bound in $D$. A subset $U$ of $P$ is called \emph{Scott open} iff it is an upper set and for each directed subset $D$ whose supremum exists, written as $\vee^{\uparrow} D$, $\vee^{\uparrow} D\in U$ implies $D\cap U\neq \emptyset$.  The collection of all Scott open subsets, denoted by $\sigma(P)$, form a topology  called \emph{Scott topology} and in general, we write $\Sigma P = (P, \sigma(P))$.   A continuous mapping between two posets equipped with the Scott topology is called \emph{Scott-continuous}. 
An element $x\in P$ is called \emph{compact} if for any directed subset $D\subseteq P$ with $\vee^{\uparrow} D$ existing and $x\leq \vee^{\uparrow} D$, there exists some $d\in D$ such that $x\leq d$. The subset of all compact elements  is denoted by $K(P)$.

An \emph{$\inf$ semilattice} is a poset $P$ in which any two elements $a, b$ have an $\inf$, denoted by $a\wedge b$.  A \emph{$\sup$ semilattice} is defined dually, and denoted by $a\vee b$. We call a poset be a \emph{lattice} if it is both an $\inf$ semilattice and a $\sup$ semilattice. An element $p\in P$ is called \emph{co-prime} if $p = 0$ or $P\setminus {\uparrow p}$ is an ideal, and $p\neq 0$ is co-prime iff  $p\leq x\vee y$ implies $p\leq x$ or $p\leq y$. Let $Cp(L)$ denote the set $\{p\in P: p\neq 0\; \rm{and}\; \rm{p\; is\; co{-}prime}\}$.
 A \emph{distributive lattice} $L$ is a lattice which satisfies the distributive law:
 \begin{center}
 	for any $a,b,c\in L$, $a\wedge(b\vee c) = (a\wedge b)\vee(a\wedge c)$.
 \end{center}
 If the distributive lattice $L$ also have the least element 0 and the greatest element 1, we shall say $L$ be a bounded distributive lattice.
 A \emph{complete lattice} is a poset in which every subset has a $\sup$ and an $\inf$. We say a dcpo $L$ is an \emph{algebraic domain} if every $x$ in $L$ satisfies that $\downarrow x\cap K(L)$ is directed and $x = \vee^{\uparrow} (\downarrow x\cap K(L))$. An algebraic domain in which every principal ideal $\downarrow x$ is a complete
lattice (in its induced order) is called an \emph{algebraic L-domain}.

Let $X$ be a topological space and $\mathcal O(X)$ be the set of all open subsets of $X$.  A subset $K$ of $X$ is compact in $\mathcal O(X)$ if and only if for each directed subset $\mathcal D\in \mathcal O(X)$ with $K\subseteq \bigcup \mathcal D$, there is a $U\in \mathcal D$ containing $K$. 
We call  a subset $U\in  \mathcal O(X)$ compact-open if it is compact and open. We use $CO(X)$ to denote the set of all compact-open subsets of $X$.

\begin{definition}{\rm(\cite{Gierz2003})}
Let L be a poset. We call the topology generated by the complements $L\setminus {\downarrow x}$ of principal filters (as subbasic open sets) the \emph{lower topology} and denote it by $\omega(L)$.
\end{definition}

\begin{definition}{\rm(\cite{Gierz2003})}
	Let $L$ be a dcpo. Then the common refinement of $\sigma (L)\vee \omega(L)$ of the Scott topology and the lower topology is called the \emph{Lawson topology}.
\end{definition}

\begin{definition}{\rm(\cite{Gierz2003})}
	An element $m$ of a poset $P$ is a \emph{minimal upper bound} (or
\emph{``mub''} for short) for a subset $A\subseteq P$ if $m$ is an upper bound for $A$ that is minimal in the set of all upper bounds of $A$. The poset $P$ is \emph{mub-complete} if given any finite subset $F\subseteq P$ and any upper bound $u$ of $F$ there exists a minimal upper
bound $y$ of $F$ such that $y\leq u$.
\end{definition}

  We usually represent the set of all minimal upper bounds of a finite subset $F$ with $mub(F)$.
\begin{theorem}{\rm(\cite{Gierz2003})}\label{mub}
	The following are equivalent for an algebraic domain $L$:
	\begin{enumerate}[(1)]
		\item $L$ is Lawson compact, i.e., $L$ is compact in the Lawson topology.
		\item $K(L)$ is mub-complete and every finite set of $K(L)$ has only
		finitely many minimal upper bounds.
	\end{enumerate}
\end{theorem}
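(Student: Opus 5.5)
The natural route is to characterise Lawson compactness of an algebraic domain through the compact elements and then prove both directions. Write $K=K(L)$. For an algebraic domain the Scott topology has basis $\{{\uparrow}k : k\in K\}$. Hence the Lawson topology has a subbasis consisting of the sets ${\uparrow}k$ with $k\in K$ and the sets $L\setminus{\uparrow}x$. Moreover, since $L\setminus{\downarrow}x=\bigcup\{{\uparrow}k: k\in K,\ k\not\le x\}$, the lower-topology part can be generated by the sets $L\setminus{\uparrow}k$ with $k\in K$. Thus every ${\uparrow}k$ ($k\in K$) is Lawson clopen, and the Lawson topology is generated by these clopen sets. This makes $L$ (Hausdorff and) zero-dimensional, and Lawson compactness becomes a statement about finite intersections of the sets ${\uparrow}k$.

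For $(1)\Rightarrow(2)$, fix a finite $F\subseteq K$. The set $U(F)=\bigcap_{k\in F}{\uparrow}k$ of upper bounds of $F$ in $L$ is Lawson clopen, hence Lawson compact, and it is Scott open, so it is covered by the sets ${\uparrow}m$ with $m\in K\cap U(F)$. Compactness yields finitely many $m_1,\dots,m_n\in K$ with $U(F)={\uparrow}\{m_1,\dots,m_n\}$. Deleting non-minimal elements, the minimal elements of $U(F)$ are exactly the minimal ones among the $m_i$. Hence $mub(F)$ is finite, lies in $K$, and every upper bound $u$ of $F$ in $K$ (indeed in $L$) lies above some minimal upper bound. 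In particular $K$ is mub-complete with finite mub-sets. One has to check that minimal upper bounds computed in $L$ and in $K$ coincide. This holds because each $m_i$ is compact and any upper bound in $K$ lies above some $m_i$.

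For $(2)\Rightarrow(1)$, the main obstacle, I would use the standard argument via the Alexander subbase lemma, or equivalently build an embedding into a product of two-point spaces. Map $L\to\{0,1\}^{K}$ by $x\mapsto\chi_{{\downarrow}x\cap K}$. This is injective by algebraicity, and its image carries the Lawson topology, since it pulls back the product subbasis to ${\uparrow}k$ and $L\setminus{\uparrow}k$. It then suffices to show the image is closed in the compact product. Suppose $I\subseteq K$ lies in the closure of the image. Then $I$ is a lower set. We also need $I$ to be directed, and this is where hypothesis (2) enters. Given $a,b\in I$, take a point $x$ of $L$ approximating $I$ on the finite set $\{a,b\}\cup mub(a,b)$. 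Then $a,b\le x$, so $x$ lies above some $m\in mub(a,b)$ by mub-completeness: compact elements below $x$ approximate $x$ directedly, so some compact $c\le x$ is above $a$ and $b$, and then some $m\le c$. Since $x$ agrees with $I$ on $mub(a,b)$, we get $m\in I$. Hence $I$ is directed and equals ${\downarrow}(\bigvee^{\uparrow}I)\cap K$, which places it in the image. Finiteness of $mub(a,b)$ is exactly what makes the approximating set finite, and this is where the proof needs care.
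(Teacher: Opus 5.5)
The paper gives no proof of this statement; it is quoted from \cite{Gierz2003}. In that book it presumably follows from the general theory of Lawson compact continuous domains, namely Lawson compactness as compactness plus coherence of the Scott topology. Your proof of $(1)\Rightarrow(2)$ is the standard one. It also shows that minimal upper bounds computed in $L$ and in $K(L)$ coincide, and that every upper bound in $L$ of a finite $F\subseteq K(L)$ dominates one of them. That stronger form is what the paper actually uses in Claim 1 of the proof that $CO(L)$ is an $FDD$-lattice.

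For $(2)\Rightarrow(1)$ you take a genuinely different and more elementary route. You embed $L$, with the Lawson topology, into the Cantor cube $\{0,1\}^{K(L)}$ via $x\mapsto{\downarrow}x\cap K(L)$, and show that the image, the set of ideals of $K(L)$, is closed. This needs only compactness of $\{0,1\}^{K(L)}$. It shows at once that $L$ is a Stone space in its Lawson topology, and it fits the paper's picture of points as $\{0,1\}$-valued maps. The price is that it works only in the algebraic case, where the Lawson topology has a clopen subbasis; the general route also covers continuous domains.

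Two repairs are needed.

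\textbf{The subbasis identity.} The identity $L\setminus{\downarrow}x=\bigcup\{{\uparrow}k: k\in K,\ k\not\le x\}$ concerns the upper topology. It does not show that the sets $L\setminus{\uparrow}k$ generate the lower topology. What you need is
\[
{\uparrow}x=\bigcap\{{\uparrow}k: k\in K\cap{\downarrow}x\},
\]
that is, $L\setminus{\uparrow}x=\bigcup\{L\setminus{\uparrow}k: k\in K\cap{\downarrow}x\}$. This is not cosmetic: it is exactly what makes your map a homeomorphism onto its image, hence what lets compactness of the image give Lawson compactness of $L$.

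\textbf{Nonemptiness of $I$.} Directed sets are nonempty, and $\emptyset$ can lie in the closure of the image. For an infinite antichain $L$ it does. So you must also show $I\neq\emptyset$, and this is exactly where the $F=\emptyset$ instance of (2) is indispensable. Your argument adapts directly:
\begin{itemize}
\item Approximate $I$ by some $x$ on the finite set $mub(\emptyset)$ of minimal elements of $K$.
\item Some compact $c\le x$ exists, since ${\downarrow}x\cap K$ is directed.
\item Mub-completeness for $\emptyset$ gives a minimal $m\le c$, so $m\in{\downarrow}x\cap K$ and hence $m\in I$.
\end{itemize}
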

\begin{lemma}{\rm(\cite{plo81})}\label{co}
	Let $L$ be an algebraic domain. Then any compact-open subset of $\Sigma L$ can be written as the form $\bigcup_{i\in F}{\uparrow k_{i}}$, where $k_{i}\in K(L)$ and $F$ is a finite subset of the natural numbers set.
	\end{lemma}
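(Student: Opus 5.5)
The statement is Lemma~\ref{co}: in an algebraic domain $L$, every compact-open subset $U$ of $\Sigma L$ has the form $\bigcup_{i\in F}{\uparrow k_i}$ with each $k_i\in K(L)$ and $F$ finite. I would use two standard facts about an algebraic domain: the sets ${\uparrow k}$ with $k\in K(L)$ form a base of the Scott topology, and every Scott open set is the union of the basic sets it contains. Once these are in place, the claim follows from the definition of compactness by passing to a finite subcover.

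First I would show that ${\uparrow k}$ is Scott open whenever $k\in K(L)$. It is clearly an upper set. Suppose $D$ is directed and $\vee^{\uparrow}D\in{\uparrow k}$, i.e.\ $k\le\vee^{\uparrow}D$. Compactness of $k$ gives some $d\in D$ with $k\le d$, so $D\cap{\uparrow k}\neq\emptyset$.

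Next, let $U$ be Scott open and $x\in U$. By algebraicity, ${\downarrow x}\cap K(L)$ is directed and its supremum is $x\in U$. Scott openness then gives a compact $k\le x$ with $k\in U$, and since $U$ is an upper set, ${\uparrow k}\subseteq U$. Hence
\[
U=\bigcup\{{\uparrow k}: k\in K(L)\cap U\}.
\]

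Now suppose $U$ is also compact. The family $\mathcal D$ of all finite unions ${\uparrow k_1}\cup\cdots\cup{\uparrow k_n}$ with $k_j\in K(L)\cap U$ is a directed family of open sets. Its union is $U$ by the previous step, and $\emptyset\in\mathcal D$ covers the case $U=\emptyset$. The paper defines compactness of $K$ through directed open covers: some $V\in\mathcal D$ contains $U$. Since each member of $\mathcal D$ is contained in $U$, we get $U=V$, which is a finite union of sets ${\uparrow k_i}$ with $k_i\in K(L)$. Indexing these $k_i$ by a finite set of natural numbers gives the stated form.

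There is no real obstacle here. The only step needing care is the basis argument, where directedness of ${\downarrow x}\cap K(L)$ is essential so that Scott openness can be applied to it.
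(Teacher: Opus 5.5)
Your argument is correct and complete. You show that each ${\uparrow k}$ with $k\in K(L)$ is Scott open, and that every Scott open $U$ equals $\bigcup\{{\uparrow k}: k\in K(L)\cap U\}$, using directedness of ${\downarrow x}\cap K(L)$. You then use compactness to find a single member of the directed family of finite unions that already equals $U$; this is the standard proof. The paper gives no proof of this lemma and only cites it, so there is no argument of its own to compare against.
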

\begin{theorem}{\rm(\cite{ga})}\label{ald}
	An algebraic domain is an algebraic L-domain if and only if every bounded nonempty subset has a meet.
\end{theorem}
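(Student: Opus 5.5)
We prove the two implications separately and take the standard characterisation of algebraic L-domains in terms of locally bounded sets.

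\emph{The forward direction.} Let $L$ be an algebraic L-domain and let $\emptyset\neq A\subseteq L$ have an upper bound $u$. Since ${\downarrow}u$ is a complete lattice in the induced order, $A$ has an infimum $m$ inside ${\downarrow}u$. This $m$ is in fact the meet of $A$ in $L$. To see this, take any lower bound $l$ of $A$ in $L$. Pick $a\in A$; then $l\leq a\leq u$, so $l\in{\downarrow}u$. Hence $l$ is a lower bound of $A$ within ${\downarrow}u$, which gives $l\leq m$. So $m=\bigwedge A$ in $L$.

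\emph{The converse.} Assume every bounded nonempty subset of the algebraic domain $L$ has a meet, and fix $x\in L$. We must show ${\downarrow}x$ is a complete lattice.
\begin{enumerate}
\item Nonempty subsets: for $\emptyset\neq A\subseteq{\downarrow}x$, the set $A$ is bounded by $x$, so $\bigwedge A$ exists in $L$. It lies below any element of $A$, hence in ${\downarrow}x$, and it is then clearly the infimum of $A$ in ${\downarrow}x$.
\item The empty set: its infimum in ${\downarrow}x$ is $x$ itself.
\end{enumerate}
So every subset of ${\downarrow}x$ has an infimum in ${\downarrow}x$, and therefore ${\downarrow}x$ is a complete lattice. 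Since $L$ is already assumed algebraic, it is an algebraic L-domain.

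\emph{Where care is needed.} The only delicate point is keeping meets in $L$ and meets in ${\downarrow}x$ apart. The key observation is that every lower bound of a nonempty subset of ${\downarrow}x$ automatically lies in ${\downarrow}x$, so the two notions of meet coincide. A possible objection is that some texts define L-domains via ${\downarrow}x$ being a complete lattice, with suprema taken inside ${\downarrow}x$. Complete lattices are characterised by arbitrary infima alone, so this causes no trouble.
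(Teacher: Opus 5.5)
The paper gives no proof of this statement. It is quoted from the literature and used as a black box, for instance in Claim 3 of the proof that $CO(L)$ is an $FDD$-lattice, where $k_i\wedge k_j$ is asserted to exist because $k_i,k_j$ have the common upper bound $c$. So there is no paper proof to compare against, and your argument is correct and complete in both directions.

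The two facts you isolate are exactly what is needed:
\begin{itemize}
\item for nonempty $A$, every lower bound of $A$ lies below some $a\in A$, hence inside any principal ideal containing $A$; so infima of $A$ computed in ${\downarrow}u$ and in $L$ coincide;
\item a poset with a top element in which every nonempty subset has an infimum is a complete lattice.
\end{itemize}
Compared with citing the result, your route shows that algebraicity plays no role. The equivalence between ``every principal ideal is a complete lattice'' and ``every nonempty subset with an upper bound has a meet'' holds in an arbitrary poset; the word `algebraic' is simply carried along on both sides.

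Two minor remarks. First, you read `bounded' as `bounded above'. That is the right reading and the one the paper relies on, but you should say so explicitly, because under the `bounded below' reading the forward implication is false. Take $\{\bot,a,b,c,d\}$ with $\bot<a,b$ and $a,b<c,d$, where $a,b$ are incomparable and $c,d$ are incomparable. Every principal ideal is a complete lattice, yet $\{c,d\}$ has lower bounds $\bot,a,b$ and no greatest one. Second, your opening sentence about using ``the standard characterisation of algebraic L-domains in terms of locally bounded sets'' does not describe what you do. You argue directly from the definition that each ${\downarrow}x$ is a complete lattice, so that sentence should be removed.
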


\section{Main Results}

\begin{definition}
	Let $L$ be a bounded lattice. For any $x, y\in L$, we say $x$ and $y$ are \emph{disjoint} if $x\wedge y = 0$.  An finite subset $F$ of $L$ is disjoint if any pair of different elements $x$ and $y$ in $F$ are disjoint. Moreover, the sup of $F$ is labeled as $\dot\vee F$.
\end{definition}

\begin{definition}\label{fddl}
	A bounded distributive lattice  $L$ is called a \emph{finitely disjunctive distributive lattice} (for short, $FDD$-lattice) if it also satisfies the following conditions:
	\begin{enumerate}[(1)]
		\item For each $x\in L$, there exists a finite subset $F\subseteq Cp(L)$ such that $x = \vee F$;
		\item For any $a, b\in Cp(L)$, $a\wedge b$ can be written as a join of finitely disjoint elements of $Cp(L)$.
	\end{enumerate}
\end{definition}

\begin{definition}\cite{Davey2002}
	Let $L$ be an $FDD$-lattice. A lattice homomorphism from $L$ to $\{0, 1\}$ is called a \emph{point} of $L$. We usually use $pt(L)$ to denote the poset composed of all points of $L$ with the pointwise order.
\end{definition}

%
Let $\mathbf{FFD}$ be the category of all $FDD$-lattices and lattice homomorphisms.
\begin{definition}\cite{Stone1938}
	Let $L$ and $M$ be two algebraic $L$-domain. A mapping $f: L\rightarrow M$ is called \emph{spectral} if for any $U\in CO(M)$, $f^{-1}(U)\in CO(L)$.
\end{definition}

Let $\mathbf{LCA}$  denote the category of all  Lawson compact algebraic $L$-domains  and spectral mappings. 

	\begin{definition}{\bf\cite{sa1987}}
	A space $X$ is \emph{coherent} if the compact-open subsets of $X$ form a basis closed under finite intersections, i.e. $CO(X)$ is a distributive sub-lattice of $\mathcal O(X)$.
\end{definition}

\begin{theorem}{\bf\cite{plo81}}\label{plo81}
	Any coherent algebraic domain $M$ is isomorphic to $pt(CO(M))$, where the isomorphism $\theta_M: M\rightarrow pt(CO(M))$ is defined as: 
	\begin{center}
		$\theta_{M}(x) = \vee^{\uparrow}\{\gamma_{{\uparrow}k}: k\in {\downarrow x}\cap K(M)\},	\forall x\in M$.
	\end{center} 
\end{theorem}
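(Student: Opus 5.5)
The statement is Theorem~\ref{plo81}: every coherent algebraic domain $M$ is isomorphic to $pt(CO(M))$ via $\theta_M$. Here $\gamma_{{\uparrow}k}$ denotes the point of $CO(M)$ given by $U\mapsto 1$ iff $k\in U$, i.e., iff ${\uparrow}k\subseteq U$.

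\textbf{Step 1: $\theta_M$ is well defined.} First, $\gamma_{{\uparrow}k}$ is a lattice homomorphism $CO(M)\to\{0,1\}$. Indeed, $k\in U\cap V$ iff $k\in U$ and $k\in V$. Also, $k\in U\cup V$ iff $k\in U$ or $k\in V$. Coherence guarantees that $CO(M)$ is a sublattice of $\mathcal O(M)$, so these operations are the lattice operations.

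Next, $k\le k'$ implies $\gamma_{{\uparrow}k}\le\gamma_{{\uparrow}k'}$, since $U$ is an upper set. Because ${\downarrow}x\cap K(M)$ is directed, the family $\{\gamma_{{\uparrow}k}\}$ is directed. Its pointwise supremum is again a lattice homomorphism: meets are preserved because the family is directed, and joins are preserved trivially. Concretely,
\[
\theta_M(x)(U)=1 \iff \exists k\in K(M),\ k\le x,\ k\in U \iff x\in U .
\]
The second equivalence holds because $U$ is Scott open and $x=\vee^{\uparrow}({\downarrow}x\cap K(M))$. So $\theta_M(x)$ is just the evaluation at $x$.

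\textbf{Step 2: $\theta_M$ is an order embedding.} If $x\le y$, then $\theta_M(x)\le\theta_M(y)$, since the sets $U$ are upper sets. Conversely, suppose $x\not\le y$. Algebraicity gives some $k\in K(M)$ with $k\le x$ and $k\not\le y$. The set ${\uparrow}k$ is compact open: it is Scott open because $k$ is compact, and it is compact because any cover contains $k$. Then $\theta_M(x)({\uparrow}k)=1$ and $\theta_M(y)({\uparrow}k)=0$. This also gives injectivity.

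\textbf{Step 3: surjectivity, the main obstacle.} Let $p\in pt(CO(M))$ and put
\[
D=\{k\in K(M): p({\uparrow}k)=1\}.
\]

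First, $D$ is nonempty. We have $M\in CO(M)$: it is open, and it is the top of the lattice, which $p$ must preserve since a point is taken to preserve the bounds. By Lemma~\ref{co}, $M=\bigcup_{i\le n}{\uparrow}k_i$. Since $p$ preserves finite joins and $p(M)=1$, some $p({\uparrow}k_i)=1$.

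Second, $D$ is directed. Take $k_1,k_2\in D$. Coherence makes ${\uparrow}k_1\cap{\uparrow}k_2$ compact open, so by Lemma~\ref{co} it equals $\bigcup_j{\uparrow}m_j$ with each $m_j$ compact and above both $k_1$ and $k_2$. Then
\[
1=p({\uparrow}k_1)\wedge p({\uparrow}k_2)=p\Big(\bigcup_j{\uparrow}m_j\Big)=\bigvee_j p({\uparrow}m_j),
\]
so some $m_j\in D$, and this $m_j$ is an upper bound of $k_1,k_2$ in $D$.

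Now let $x=\vee^{\uparrow}D$, which exists because $M$ is a dcpo. It remains to show $\theta_M(x)=p$, i.e., for every $U\in CO(M)$, $x\in U$ iff $p(U)=1$. Write $U=\bigcup_i{\uparrow}k_i$ by Lemma~\ref{co}.
\begin{itemize}
\item Suppose $p(U)=1$. Then some $p({\uparrow}k_i)=1$, so $k_i\in D$, hence $k_i\le x$ and $x\in U$.
\item Suppose $x\in U$. Since $U$ is Scott open and $D$ is directed with supremum $x$, some $d\in D$ lies in $U$. Then ${\uparrow}d\subseteq U$, and monotonicity of $p$ gives $p(U)\ge p({\uparrow}d)=1$.
\end{itemize}
Thus $p=\theta_M(x)$.

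The delicate points all lie in this step: the directedness of $D$, which uses coherence together with Lemma~\ref{co}, and the nonemptiness of $D$, which uses that points preserve $1$. Together with Step 2, $\theta_M$ is an order isomorphism.
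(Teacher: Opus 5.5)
The paper gives no proof of this statement. It is imported from \cite{plo81} and used as a black box, so there is no internal argument to compare yours against.

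Your proof is correct and self-contained. It uses only algebraicity and coherence, with no $L$-domain or Lawson-compactness hypothesis, which matches the generality of the statement.

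Its structure parallels what the paper does on the lattice side. Via $\gamma_{{\uparrow}k}\le p\iff p({\uparrow}k)=1$, your set $D=\{k\in K(M): p({\uparrow}k)=1\}$ is exactly the set $\mathcal H_p$ of Claims 3--4 of Theorem~\ref{th37}. Where the paper uses condition (2) of Definition~\ref{fddl}, you use coherence plus Lemma~\ref{co}. You only need ${\uparrow}k_1\cap{\uparrow}k_2$ to be a finite union of sets ${\uparrow}m_j$, not a disjoint one.

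What your route adds beyond the citation is the identification $\theta_M(x)(U)=1\iff x\in U$ in Step 1. With it, the paper's long check in the final theorem that the second naturality square commutes reduces to one line:
\[
\theta_T(g(s))(U)=1\iff g(s)\in U\iff s\in g^{-1}(U)\iff \theta_S(s)(g^{-1}(U))=1 .
\]

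One justification should be tightened. $M\in CO(M)$ does not follow from $M$ being open; you need $M$ to be Scott-compact. This is part of coherence when ``closed under finite intersections'' includes the empty intersection, as in the standard definition. In the paper's application it follows from Lawson compactness, since the Lawson topology refines the Scott topology.

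Without this hypothesis the step can fail. For example, if $M$ is an infinite antichain, then $CO(M)$ consists of the finite subsets and has no top. The fact $p(1)=1$, which you use to show $D\neq\emptyset$, is then unavailable. With that point made explicit, the argument is complete.
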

One can easily verify that each Lawson compact algebraic $L$-domain as a space is coherent. Thus the following result can be obtained:
\begin{corollary}
	Every Lawson compact algebraic $L$-domain $L$ is isomorphic to $pt(CO(L))$ under the isomorphism defined in Theorem \ref{plo81}.
\end{corollary}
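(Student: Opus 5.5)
The Corollary follows from Theorem \ref{plo81} once we check that a Lawson compact algebraic $L$-domain $L$, viewed as the space $\Sigma L$, is coherent. Coherence asks for two things: $CO(\Sigma L)$ is a basis, and $CO(\Sigma L)$ is closed under finite intersections (including the empty intersection, i.e.\ $L\in CO(\Sigma L)$). Once both hold, $CO(L)$ is a distributive sublattice of $\mathcal O(\Sigma L)$, and Theorem \ref{plo81} gives that $\theta_L$ is an isomorphism $L\cong pt(CO(L))$.

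\textbf{Basis.} Since $L$ is algebraic, the sets ${\uparrow k}$ with $k\in K(L)$ form a basis of the Scott topology. Each ${\uparrow k}$ is Scott open because $k$ is compact. It is compact because any open cover of ${\uparrow k}$ has a member containing $k$, and that member then contains all of ${\uparrow k}$. So $CO(\Sigma L)$ contains a basis.

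\textbf{Finite intersections.} By Lemma \ref{co}, every compact-open set has the form $\bigcup_{i\in F}{\uparrow k_i}$ with $F$ finite. By distributivity of $\cap$ over $\cup$, it suffices to show that ${\uparrow k_1}\cap{\uparrow k_2}\in CO(\Sigma L)$ for $k_1,k_2\in K(L)$. By Theorem \ref{mub}, $K(L)$ is mub-complete and $m:=mub\{k_1,k_2\}\cap K(L)$ is finite. I claim
\[
{\uparrow k_1}\cap{\uparrow k_2}=\bigcup_{n\in m}{\uparrow n}.
\]
The inclusion $\supseteq$ is clear. For $\subseteq$, take $x\geq k_1,k_2$. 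Since $\downarrow x\cap K(L)$ is directed, there is a compact $c\leq x$ with $c\geq k_1,k_2$. Mub-completeness of $K(L)$ then gives a minimal upper bound $n\leq c$ of $\{k_1,k_2\}$ in $K(L)$, so $x\in{\uparrow n}$.

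This is the delicate step: the mubs here are taken in $K(L)$, whereas we need a statement about upper bounds in $L$. The displayed equality as argued holds regardless, but to see that minimality in $K(L)$ agrees with minimality in $L$ I would use the $L$-domain structure. By Theorem \ref{ald}, $\downarrow x$ is a complete lattice, so $k_1\vee k_2$ exists in $\downarrow x$. This join is a compact element of $L$, as it is the supremum of a finite set of compacts taken in a principal ideal and ${\uparrow}$ of it is an intersection of Scott opens that is Scott-closed under directed sups. It is also a minimal upper bound of $\{k_1,k_2\}$.

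The right-hand side of the displayed equality is a finite union of basic compact-open sets, hence compact-open. Finally, the empty intersection is the whole space $L$. Applying mub-completeness to the empty set, $L$ has finitely many minimal compact elements $n$, and by the same argument $L=\bigcup{\uparrow n}$, which is compact-open.
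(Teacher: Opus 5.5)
Your proposal is correct and follows the paper's route. The paper only asserts that a Lawson compact algebraic $L$-domain is coherent and then applies Theorem \ref{plo81}. Your verification of coherence (reduce via Lemma \ref{co} to ${\uparrow k_1}\cap{\uparrow k_2}$, then use Theorem \ref{mub}) is the same computation the paper carries out in Claim 1 of the next theorem.

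Drop the paragraph you call the delicate step: its reasoning is invalid, and the argument does not need it. Write $k_1\vee_x k_2$ for the join in ${\downarrow x}$.
\begin{itemize}
\item The set ${\uparrow}(k_1\vee_x k_2)$ is in general strictly smaller than ${\uparrow k_1}\cap{\uparrow k_2}$. This happens as soon as $\{k_1,k_2\}$ has two minimal upper bounds.
\item Calling it an intersection of Scott open sets that is closed under directed sups only says it is an upper set. It does not show that it is Scott open, so nothing you wrote establishes that this join is compact.
\end{itemize}

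What your bookkeeping about which set of mubs is finite actually needs is an elementary fact. In any algebraic domain, for finite $F\subseteq K(L)$, the minimal upper bounds of $F$ in $L$ and in $K(L)$ coincide. To see this, let $n$ be minimal in either sense and let $y\le n$ be an upper bound of $F$. Directedness of ${\downarrow y}\cap K(L)$ gives a compact $c\le y$ above $F$. Minimality of $n$ forces $c=n$, hence $y=n$ and $n\in K(L)$.

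No $L$-domain structure is used here, nor in the paper's coherence argument. Also, $L\in CO(L)$ is immediate, because the Scott topology is coarser than the Lawson topology. So you need not apply Theorem \ref{mub} to the empty set.
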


\begin{theorem}
	If $L$ is a Lawson compact algebraic $L$-domain, then  $CO(L)$ is an $FDD$-lattice. 
	\begin{proof}
	$\mathbf{Claim\; 1}$: $CO(L)$ is a distributive lattice.
	
	Assume $U, V$ be compact-open subsets of $\Sigma L$.  According to the obvious fact that the union of arbitrary finite number of compact-open subsets is still compact-open, we know $U\cup V\in CO(L)$.  By Lemma \ref{co}, it is reasonable to assume that $U = \cup_{i\in F}{\uparrow}k_{i}$, $V = \cup_{j\in J}{\uparrow}k_{j}$, where $F$ and $J$ are finite subsets of natural numbers. Then
	\begin{align*}
		U\cap V 
		 & =
		 (\cup_{i\in F}{\uparrow}k_{i})\cap (\cup_{j\in J}{\uparrow}k_{j})  \\
		& =  \cup_{i\in F}\cup_{j\in J}({\uparrow}k_{i}\cap {\uparrow}k_{j}).
	\end{align*}
 Notice that for each $i\in F, j\in J$, ${\uparrow}k_{i}\cap {\uparrow}k_{j}$ is actually the set of all upper bounds of $\{k_{i}, k_{j}\}$. Thus  ${\uparrow}k_{i}\cap {\uparrow}k_{j} = {\uparrow}G_{i,j} $, where each $G_{i,j}$ is a finite subset of $mub(\{k_{i}, k_{j}\})$ in $K(L)$ by Lemma \ref{mub}.  This immediately reveals that $U\cap V$ can be written as a union of finite principal filters of compact elements, more precisely,  a union of finite compact-open subsets. So $U\cap V$ is compact-open. At this point, we know $CO(L)$ is a lattice ordered by inclusion, in which the  sup and inf of the finite elements are actually the  union and intersection of sets. And inherently, $CO(L)$ is a distributive lattice.  
 
$\mathbf{Claim\; 2}$:  The co-prime elements of $CO(L)\setminus\{\emptyset\}$  are precisely $\{{\uparrow}k: k\in K(L)\}$.
 
 On the one hand,  suppose $U$ is a co-prime element of $CO(L)$, Lemma \ref{co} enables us to write $U = \cup_{i\in F}{\uparrow}k_i$, where $F$ is a finite subset of natural numbers. Then by the property of being co-prime,  we can find an $i_{0}\in F$ satisfying that $U = {\uparrow}k_{i_{0}}$.
 On the other hand, for each $k\in K(L)$ with ${\uparrow}k\subseteq U\cup V$, where $U, V\in CO(L)$, then necessarily  $k\in U$ or $k\in V$. So we get that ${\uparrow}k\subseteq U$ or ${\uparrow}k\subseteq V$. This is equivalent to saying that all of $\{{\uparrow}k: k\in K(L)\}$ are co-prime elements of $CO(L)$.
	
$\mathbf{Claim\; 3}$:  $CO(L)$ is an $FDD$-lattice.
	
	Building upon the established result of Claim 2 and  invoking Lemma \ref{co} again, we derive  that each element of $CO(L)$ can be written as a union of finite elements of $Cp(CO(L))$. Thus   Condition (1) of Definition \ref{fddl} is satisfied.
	
	 Let ${\uparrow}l, {\uparrow}m$ be two co-prime elements of $CO(L)$. By Claim 1, we can write  ${\uparrow}l\cap {\uparrow}m = \cup_{i\in F}{\uparrow}k_{i}$, where $F$ is a finite subset of natural numbers. Moreover, this finiteness allows us to justifiably require that these sets $\{{\uparrow}k_{i}: i\in F\}$ are mutually incomparable under inclusion.  To finish the proof, it remains to show that for any two $i, j\in F$, ${\uparrow}k_{i}\cap {\uparrow}k_{j} = \emptyset$. Suppose  there exist $i, j\in F$   such that ${\uparrow}k_{i}\cap {\uparrow}k_{j} \neq \emptyset$. Then there is an element $c\in {\uparrow}k_{i}\cap {\uparrow}k_{j}$, that is, $c$ is an upper bound of $k_{i}, k_{j}$. Since $L$ is an  $L$-domain, $k_{i}\wedge k_{j}\in L$. Note that both $k_{i}$ and  $k_{j}$ are upper bounds of $\{l, m\}$. Thus $k_{i}\wedge k_{j}$ is also an upper bound of $\{l, m\}$, which means that there must be an $i_{0}\in F$ such that $k_{i}\wedge k_{j}\geq k_{i_{0}}$. Then we have $k_{i}\geq k_{i_{0}}, k_{j}\geq k_{i_{0}}$. Clearly, this contradicts the assumption that these finite sets  $\{{\uparrow}k_{i}: i\in F\}$ are pairwise incomparable. So Condition (2) of Definition \ref{fddl} holds. 
	 

	\end{proof}

\end{theorem}

\begin{theorem}\label{th37}
	If $L$ is an $FDD$-lattice, then $pt(L)$ is a Lawson compact algebraic $L$-domain.
	
	\begin{proof}
		$\mathbf{Claim\; 1}$:  $pt(L)$ is a dcpo.
		
		Assume $\{p_{i}: i\in I\}$ is a directed subset of $pt(L)$. We define $\vee^{\uparrow} p_{i}: L\rightarrow \{0, 1\}$ as
		\begin{center}
			$\vee^{\uparrow} p_{i}(x) = \vee^{\uparrow}\{p_{i}(x): i\in I\}$, for any $x\in L$.
		\end{center}
		One can easily verify that $\vee^{\uparrow} p_{i}$ preserves $0, 1$ and finite  nonempty sups. The remainder we need to prove is that $\vee^{\uparrow} p_{i}$ preserves  finite nonempty  infs. To do this, we pick $x, y\in L$ to show $\vee^{\uparrow} p_{i}(x\wedge y) = \vee^{\uparrow} p_{i}(x)\wedge \vee^{\uparrow} p_{i}(y)$. If $\vee^{\uparrow} p_{i}(x\wedge y) = 0$, then for any $i\in I$, $p_{i}(x\wedge y) = 0$. Suppose $\vee^{\uparrow} p_{i}(x) = 1$ and $\vee^{\uparrow} p_{i}(y) = 1$, there will exist $i_{1}, i_{2}\in I$ such that $p_{i_{1}}(x) = 1$ and $p_{i_{2}}(y) = 1$. By the directness of $\{p_{i}: i\in I\}$, we can find $i_{3}\in I$ such that $p_{i_{3}}(x) = 1, p_{i_{3}}(y) = 1$. Then from the fact that $p_{i_3}\in pt(L)$, we deduce that $p_{i_{3}}(x\wedge y) = p_{i_{3}}(x)\wedge p_{i_{3}}(y)  = 1$, which results in a obvious contradiction with $p_i(x\wedge y) = 0$ for all $i\in I$. So $\vee^{\uparrow} p_{i}(x) = 0$ or $\vee^{\uparrow} p_{i}(y) = 0$, that is to say, $\vee^{\uparrow} p_{i}(x)\wedge \vee^{\uparrow} p_{i}(y) = 0$. If  $\vee^{\uparrow} p_{i}(x\wedge y) = 1$, then there exists an $i_{0}\in I$ such that $p_{i_{0}}(x\wedge y) = 1$. Since $p_{i_{0}}\in pt(L)$, $p_{i_{0}}(x)\wedge p_{i_{0}}(y) = p_{i_{0}}(x\wedge y) = 1$, which means $p_{i_{0}}(x) =1$ and $p_{i_{0}}(y) = 1$. Thus $\vee^{\uparrow} p_{i}(x) = 1$ and $\vee^{\uparrow} p_{i}(y) = 1$. Naturally, we have $\vee^{\uparrow} p_{i}(x)\wedge \vee^{\uparrow} p_{i}(y) = 1$. Consequently, $\vee^{\uparrow} p_{i}(x\wedge y) = \vee^{\uparrow} p_{i}(x)\wedge \vee^{\uparrow} p_{i}(y)$ holds.
		
		Fixed an $a\in L$, we define a mapping $\gamma_{a}: L\rightarrow \{0, 1\}$ as following:
		\vspace{-1.5em}
		\begin{center}
			\[
			\gamma_{a}(x) = 
			\begin{cases}
				1, & \text{if} \;\;  x \geq a;  \\
				0, &  \text{otherwise}.
			\end{cases}
			\]
		\end{center}
Notably, for any $a\in Cp(L)$, the definition of $\gamma_a$ makes one immediately derive  that  $\gamma_a\leq p$ iff $p(a) = 1$.
			
		$\mathbf{Claim\; 2}$: For each $a\in Cp(L)$, $\gamma_{a}\in pt(L)$.
		
  One can easily verify that   $\gamma_{a}$ preserves $0, 1$ and all  finite nonempty infs. Now consider the sups of  finite  nonempty subsets. For any $x, y\in L$, we want to show $\gamma_{a}(x\vee y) = \gamma_{a}(x)\vee\gamma_{a}(y)$. If $\gamma_{a}(x\vee y) = 0$, then $x\vee y\ngeq a$. This means  $x\ngeq a$ and $y\ngeq a$. So $\gamma_{a}(x) = 0$ and $\gamma_{a}(y) = 0$, which immediately implies that $\gamma_{a}(x)\vee\gamma_{a}(y) = 0$. If $\gamma_{a}(x\vee y) = 1$, then $x\vee y\geq a$. By the co-primeness of $a$, we get that $x\geq a$ or $y\geq a$. Thus $\gamma_{a}(x) = 1$ or $\gamma_{a}(y) = 1$, directly from which we conclude that $\gamma_{a}(x)\vee\gamma_{a}(y) = 1$. In a word, $\gamma_{a}(x\vee y) = \gamma_{a}(x)\vee\gamma_{a}(y)$ holds.
  
$\mathbf{Claim\; 3}$:  For each $p\in pt(L)$, the set $\mathcal H_p = \{\gamma_a: \gamma_a\leq p,  a\in Cp(L)\}$ is directed.
	
	Firstly, we need to say that the set $\mathcal H_p$ is nonempty.  Since $p$ is a lattice homomorphism between two bound lattices, $p(1) = 1$. By the definition of an $FDD$-lattice, we can find  a finite subset $F$ of $Cp(L)$ such that $1 = \vee F$, which means $p(\vee F) = 1$, and further, $\vee \{p(x): x\in F\} = 1$. So there exists an $x_0$ satisfying that $p(x_0) = 1$. From which one can deduce that $\gamma_{x_0}\leq p$ and meanwhile, $\gamma_{x_0}\in \mathcal H_p$. Thus, $\mathcal H_p$ is nonempty.

	Now consider $\gamma_{a_{1}}, \gamma_{a_{2}}\in \mathcal H_p$.  Then $\gamma_{a_{1}}, \gamma_{a_{2}}\leq p$, which induces that	$p(a_1) = 1$ and $p(a_2) = 1$. Since $p\in pt(L)$, we have $p(a_1\wedge a_2) =  p(a_1)\wedge p(a_2) = 1$.
	As  $a_{1}, a_{2}\in Cp(L)$, by Definition \ref{fddl}, $a_1\wedge a_2 = \dot\vee F$, where $F$ is a  finite disjoint subset of $Cp(L)$.  Thus we further deduce that $p(\dot\vee F) = 1$. Then $\vee\{p(x): x\in F\} = 1$ utilizing $p\in pt(L)$ again. This means $p(x_0) = 1$ for some $x_0\in F$, which is equivalent to saying $\gamma_{x_0}\leq p$. The equation $a_1\vee a_2 = \dot\vee F$ indicates that $x_0\leq a_1\wedge a_2$, naturally, $x_0\leq a_1, a_2$, so $\gamma_{a_{1}}, \gamma_{a_{2}}\leq \gamma_{x_0}$.   Hence, we find a $\gamma_{x_0}\in \mathcal H_p$ that is larger than $\gamma_{a_{1}}, \gamma_{a_{2}}$. So $\mathcal H_p$ is a directed subset of $pt(L)$.
	
 Since Claim 1 proved that $pt(L)$ is a dcpo, for each $p\in pt(L)$, $\mathcal H_p$ as a directed subset of $pt(L)$ has a sup $\bigvee^{\uparrow}\mathcal H_p$.
	
	$\mathbf{Claim\; 4}$:  For each $p\in pt(L)$,  $p = \bigvee^{\uparrow}\mathcal H_p$.
	
	Clearly, the definition of $\mathcal H_p$ necessitates that $ \bigvee\mathcal H_p\leq p$. See the reverse direction, we just need to consider $x\in L$ which meets $p(x) = 1$. According to the definitions of $FDD$-lattices, there exists a finite subset $F\subseteq Cp(L)$ such that $x = \vee F$. As $p\in pt(L)$, we get that $\vee p(F) = p(\vee F) = 1$, from which we can find an $x_0\in F$ satisfying that $p(x_0) = 1$. This implies that $\gamma_{x_0}\leq p$, then $\gamma_{x_0}\in \mathcal H_p$. Besides, since $x_0\in F$, $x_0\leq x$. This immediately leads to the fact that $\gamma_{x_0}(x) = 1$. Thus there exists a $\gamma_{x_0}\in \mathcal H_p$ such that $p\leq \gamma_{x_0}$. Hence, $p\leq\bigvee\mathcal H_p$.
	
	$\mathbf{Claim\; 5}$: The compact elements of $pt(L)$ are precisely the collection of $\{\gamma_a: a\in Cp(L)\}$.
		
	We first verify the compactness of each $\gamma_a, a\in Cp(L)$. Assume $\{f_i: i\in I\}$ is a directed subset of $pt(L)$ and $\gamma_a\leq \vee^{\uparrow} f_i$. Then we have $\vee^{\uparrow} f_i(a) = \vee^{\uparrow} (f_i(a)) = 1$, which means that there exists an $i_0\in I$ such that $f_{i_0}(a) = 1$, equivalently, $\gamma_a\leq f_{i_0}$. Thus each $\gamma_a, a\in Cp(L)$ is a compact element of $pt(L)$. On the other hand, given an arbitrary compact element $p\in pt(L)$.  By Claim 3 and Claim 4, we know	$\mathcal H_p\subseteq pt(L)$ is directed and $p = \bigvee^{\uparrow}\mathcal H_p$. So by the compactness of $p$, there must be a $\gamma_{a}\in \mathcal H_p$ such that $p\leq\gamma_{a}$. Meanwhile, the definition of $\mathcal H_p$ indicates that $\gamma_{a}\leq p$. Thus $p = \gamma_{a}$. Taken together, $K(pt(L)) = \{\gamma_a: a\in Cp(L)\}$.
	
	Combining Claims 3, 4, and 5, we see that  each $p\in pt(L)$ is actually a supremum of a directed subset of $pt(L)$, in which each element is compact and less than $p$. Thus from the definitions of algebraic domains, we reach that $pt(L)$ is an algebraic domain.
	
	$\mathbf{Claim\;  6}$: $pt(L)$ is an algebraic $L$-domain.
	
	The definition of an $L$-domain requires us to show every subset with an upper bound has a supremum. For the empty subset $\emptyset$ of $pt(L)$,  since $pt(L)$ has a least element $\gamma_1$, the supremum of $\emptyset$  must be $\gamma_1$. For the nonempty subsets of $pt(L)$,
	based on the established fact that $pt(L)$ is an algebraic domain, it suffices  to prove that for any nonempty finite subset $F$ of $K(pt(L))$ with an upper bound $p$ in $pt(L)$, its supremum in ${\downarrow}p$ exists. By Claim 5, we may assume that $F = \{\gamma_{a_i}: a_i\in Cp(L), i = 1,2...n\}$.  For each $i$, $\gamma_{a_i}\leq p$ implies that $p(a_i) = 1$. Since $p\in pt(L)$, $p(\wedge a_i) = \wedge p(a_i) = 1$. Since $L$ is an $FDD$-lattice, we can find a  finite disjoint subset $E\subseteq Cp(L)$ such that $\wedge a_i = \dot\vee E$, which means $p(\dot\vee E) = 1$. By $p\in pt(L)$ again, $\dot\vee \{p(e): e\in E\} = 1$. Then there exists an $e_0\in E$ satisfying $p(e_0) = 1$. Moreover, the existence of $e_0$ is unique. If there is another element $e_1\in E$ satisfying $p(e_1) = 1$, then the equation $p(e_0\wedge e_1) = p(e_0)\wedge p(e_1) = 1$ holds. This will  contradict the fact that $e_0$ and $e_1$ are disjoint.  Thereby, we find a unique $e_0\in Cp(L)$ such that $p(e_0) = 1$, equivalently, there is a $\gamma_{e_0}\leq p$. We next declare that $\gamma_{e_0}$ is just the supremum of $F$. Clearly, by $\wedge a_i = \dot\vee E$, we know $e_0\leq a_i$ for each $i = 1,2...n$. Thus $\gamma_{a_i}\leq \gamma_{e_0}, \forall i = 1,2...n$, that is to say,   $\gamma_{e_0}$ is an upper of $F$. Suppose $h$ is an another upper bound of $F$ in ${\downarrow}p$. Then $h(a_i) = 1$ for each $i$.  Similar to the process described above, we can also find $e'\in E$ such that $h(e') = 1$. Because $h\leq p$, $p(e') = 1$. According to the uniqueness of $e_0$, we have $e_0 = e_1$. This indicates $h(e_0) = 1$, which is equivalent to saying that $\gamma_{e_0}\leq h$. So   $\gamma_{e_0}$ is the least upper bound of $F$, i.e., $\gamma_{e_0} = \vee F$. And hence, $pt(L)$ is an algebraic $L$-domain.
	
	
	$\mathbf{Claim 7}$: $pt(L)$ is a Lawson compact algebraic $L$-domain.
	
	It suffices to prove that $K(pt(L))$ is mub-complete and every finite subset $F$ of which has only finitely minimal upper bounds. We may  set $F = \{\gamma_{b_i}\!: b_i\in Cp(L), i = 1,2...n\}$. Then by Definition \ref{fddl}, we know $\wedge b_i = \dot\vee J$, where $J$ is a finite disjoint subset of $Cp(L)$. This fact yields the following equation:
	\begin{align*}
		F^u \cap K(pt(L)) 
		&= \{f\in pt(L): \gamma_{b_i}\leq f, i = 1,2...n\}\cap K(pt(L))\\
		&= \{f\in pt(L): f(b_i) = 1, i = 1,2...n\} \cap K(pt(L))\\
		&= \{f\in pt(L): f(\wedge b_i) = 1, i = 1,2...n\}\cap K(pt(L))\\
		&= \{f\in pt(L): f(\dot\vee J) = 1\}\cap K(pt(L))\\
		&= \{f\in pt(L): \dot\vee \{f(j): j\in J\} = 1\}\cap K(pt(L))\\
		&= \{f\in pt(L): \exists j\in J\; {\rm{s.t.}}\; f(j) = 1\}\cap K(pt(L))\\
		&= \{f\in pt(L): \exists j\in J\; {\rm{s.t.}}\; \gamma_j\leq f\}\cap K(pt(L))\\
		&={\uparrow}\{\gamma_j: j\in J\}\cap K(pt(L))
	\end{align*}
	Clearly, $\{\gamma_j: j\in J\}$ is a finite subset in $K(pt(L))$. Finally,  we just need to show  that $\{\gamma_j: j\in J\}$ are minimal upper bounds of $F$ in $K(pt(L))$. Pick an arbitrary $j\in J$, suppose that $\gamma_c\in K(pt(L))$ is an upper bound of $F$ and $\gamma_c\leq \gamma_j$.  Since $\gamma_c\in F^u\cap K(pt(L))$, there exists a $j_0\in J$ such that $\gamma_c\geq \gamma_{j_0}$. Then $\gamma_{j_0}\leq \gamma_{j}$, based on the fact that $J$ is disjoint, we get that $\gamma_{j_0} = \gamma_{j}$. This immediately indicates that $\gamma_{c} = \gamma_{j}$. Therefore, each $\gamma_{j}$ is a minimal upper bound of $F$ in $K(pt(L))$.
	\end{proof}
\end{theorem}
\begin{lemma}\label{p10}
	Let $L$ be an $FDD$-lattice. Then for any two elements $a, b\in L$ with $a\nleq b$, there exists a point $p\in pt(L)$ such that $p(a) = 1, p(b) = 0$.
	
	\begin{proof}
		As $L$ is an $FDD$-lattice, one can find a finite subset $F\subseteq Cp(L)$ such that $a = \vee F$, which means $\vee F\nleq b$. So  there is an $a_0\in F$ satisfying that $a_0\nleq b$. Let us define $\gamma_{a_0}: L\rightarrow \{0, 1\}$:
		\vspace{-1.5em}
		\begin{center}
		\[ \forall x\in L,
		\gamma_{a_0}(x) = 
			\begin{cases}
				1, & \text{if} \;\;  x \geq a_0;  \\
				0, &  \text{otherwise}.
			\end{cases}
			\]
		\end{center}
		By Claim 2 in Theorem \ref{th37} and the fact that $a_0\in Cp(L)$, we know $\gamma_{a_0}\in pt(L)$. Since $a_0\leq a$ and $a_0\nleq b$, we have $\gamma_{a_0}(a) = 1, \gamma_{a_0}(b) = 0$.
	\end{proof}
\end{lemma}

\begin{theorem}\label{eqa1}
	Let $L$ be an $FDD$-lattice. Then $L$ is represented by the collection of the compact open subsets of the Lawson compact algebraic $L$-domain $pt(L)$.
	\begin{proof}
		We define a mapping $\eta_L: L\rightarrow CO(pt(L))$ as:
		\begin{center}
			$\eta_L(x) = \{p\in pt(L): p(x) = 1\}, \forall x\in L$.
		\end{center}
		
	$\mathbf{Claim 1}$: $\eta_L$ is well-defined.
	
	For any $x\in L$, we can find a finite subset $F\subseteq Cp(L)$ satisfying that $x = \vee F$ since $L$ is an $FDD$-lattice. Then for each $p\in \eta_L(x)$, $p(\vee F) = 1$. Thus there exists an $a_p\in F$ such that $p(a_p) = 1$ by  $p\in pt(L)$,  which immediately indicates that $p\geq \gamma_{a_p}$. Hence, $\eta_L(x) \subseteq {\uparrow}\{\gamma_a: a\in F\}$. Conversely, for any $a\in F$, $a\leq x$, thus $\gamma_a(x) = 1$. This means that each $\gamma_a$ belongs to $\eta_L(x)$. So $\eta_L(x) = {\uparrow}\{\gamma_a: a\in F\}$, where ${\uparrow}\{\gamma_a: a\in F\}$ is clearly a compact-open subset of $pt(L)$.
	
	$\mathbf{Claim 2}$: $\eta_L$ is order-preserving.
	
	For any $x, y\in L$ with $x\leq y$, we need to prove that $\eta_L(x)\subseteq \eta_L(y)$. Assume $p\in \eta_L(x)$, that is, $p\in pt(L)$ and $p(x) = 1$. Since $x\leq y$, $p(y) = 1$. Consequently, $p\in \eta_L(y)$. Thus $\eta_L(x)\subseteq \eta_L(y)$ holds.
	
		$\mathbf{Claim 3}$: $\eta_L$ is injective.
		
		For any $x, y\in L$ with $x\neq y$, without loss of generality,  assume that $x\nleq y$. Then by Lemma \ref{p10}, we can find a point $p\in pt(L)$ such that $p(x) = 1$ and $p(y) = 0$. From which we get that $p\in \eta_L(x)$ and $p\notin \eta_L(y)$, so $\eta_L(x)\neq\eta_L(y)$.

	$\mathbf{Claim 4}$: $\eta_L$ is surjective.
	
	Let $U$ be an arbitrary element of $CO(pt(L))$.  Then by lemma \ref{co} and Claim 5 in Theorem \ref{th37}, we know $U = \cup\{{\uparrow}\gamma_{a_i}: a_i\in Cp(L), i\in F\}$, where  $F$ is a finite subset of the natural numbers. A further assumption we can make is that none of these sets $\{{\uparrow}\gamma_{a_i}: i\in F\}$ is a subset of another. Next we prove that $U = \eta_L(\vee_{i\in F}a_i)$, where $\eta_L(\vee_{i\in F}a_i) = \{p\in pt(L): p(\vee_{i\in F}a_i) = 1\}$. On the one hand, for each $p\in U$, there exists an $i_0\in F$ such that $p\in {\uparrow}\gamma_{a_{i_0}}$, which means $p(a_{i_0}) = 1$. It follows that $p(\vee_{i\in F}a_i) = 1$. So $p\in \eta_L(\vee_{i\in F}a_i)$. On the other hand, pick $p\in \eta_L(\vee_{i\in F}a_i)$, then $p(\vee_{i\in F}a_i) = 1$. Since $p$ is an $FDD$-lattice homomorphism, we have $\vee\{p(a_i): i\in F\} = p(\vee_{i\in F}a_i) = 1$. This in turn implies that $p(a_{i_0}) = 1$ for some $i_0\in F$. Meanwhile, $p(a_{i_0}) = 1$ indicates that $p\geq \gamma_{a_{i_0}}$, that is to say, $p\in U$. Consequently,  $U = \eta_L(\vee_{i\in F}a_i)$. Hence, $\eta_L$ is a surjective mapping.
		
	\end{proof}
	\end{theorem}

\begin{lemma}\label{lca}
	Let $L, M$ be two $FDD$-lattices and $f: L\rightarrow M$ a lattice homomorphism. Then $pt(f): pt(M)\rightarrow pt(L)$ is a spectral mapping, where $pt(f)$ is defined as following:
	\begin{center}
	 $pt(f)(p) = p\circ f, \forall p\in pt(M)$.
	\end{center}
	It is trivial to show that $p\circ f\in pt(L)$ as $p$ and $f$ are both lattice homomorphisms.

\end{lemma}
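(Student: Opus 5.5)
The plan is to show that the preimage under $pt(f)$ of a compact-open set is again compact-open, by reducing everything to the sets $\eta$ of Theorem \ref{eqa1}. By that theorem, every $U\in CO(pt(L))$ has the form $U=\eta_L(x)=\{q\in pt(L): q(x)=1\}$ for some $x\in L$; this is the surjectivity of $\eta_L$ (Claim 4 there), which used Lemma \ref{co} and Claim 5 of Theorem \ref{th37}. I will therefore fix an arbitrary $x\in L$ and compute $pt(f)^{-1}(\eta_L(x))$ directly.

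The key computation is
\begin{align*}
pt(f)^{-1}(\eta_L(x)) &= \{p\in pt(M): (p\circ f)(x)=1\}\\
&= \{p\in pt(M): p(f(x))=1\} = \eta_M(f(x)).
\end{align*}
Since $f(x)\in M$ and $M$ is an $FDD$-lattice, Claim 1 of Theorem \ref{eqa1} (well-definedness of $\eta_M$) gives $\eta_M(f(x))\in CO(pt(M))$. Explicitly, $\eta_M(f(x))={\uparrow}\{\gamma_b: b\in G\}$, where $f(x)=\vee G$ with $G\subseteq Cp(M)$ finite. This is a finite union of principal filters of compact elements (Claim 5 of Theorem \ref{th37}), hence Scott open and compact. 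So $pt(f)$ is spectral.

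Before this, I will briefly confirm that $pt(f)$ is well defined. The composite $p\circ f$ of two lattice homomorphisms preserving $0$ and $1$ is again such a homomorphism, so $p\circ f\in pt(L)$; the statement itself already notes this. It is also worth recording that $pt(f)$ is monotone, since $p\le p'$ pointwise implies $p\circ f\le p'\circ f$. Monotonicity is not needed for spectrality, but it will be useful for the functoriality that follows.

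The only point requiring care is that an arbitrary compact-open subset of $pt(L)$ really is of the form $\eta_L(x)$. This is exactly the surjectivity established in Theorem \ref{eqa1}, so I expect no genuine obstacle beyond citing it correctly. One must also make sure that the lattice homomorphisms under consideration preserve the bounds $0$ and $1$, because that is what places $p\circ f$ in $pt(L)$.
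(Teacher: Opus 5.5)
Your argument is correct and follows essentially the paper's route. The paper gives no separate proof of spectrality beyond remarking that $p\circ f\in pt(L)$, but your identity $pt(f)^{-1}(\eta_L(x))=\eta_M(f(x))$ is exactly the computation it performs when checking naturality of $\eta$ in the final theorem; combined, as you do, with the surjectivity of $\eta_L$ and the well-definedness of $\eta_M$ from Theorem~\ref{eqa1}, it yields spectrality, and your remark that $f$ must preserve $0$ and $1$ makes explicit an assumption the paper leaves implicit.
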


\begin{lemma}\label{fdd}
	Let $L, M$ be two pointed Lawson compact algebraic $L$-domains and $f: L\rightarrow M$ a spectral mapping. Then $CO(f): CO(M)\rightarrow CO(L)$  defined by
	 	\begin{center}
	 	$CO(f)(U) = f^{-1}(U), \forall U\in CO(M)$.
	 \end{center}
	 is a lattice homomorphism.
	 \begin{proof}
	 	It is straightforward  to verify.
	 \end{proof}
\end{lemma}

%

\begin{theorem}
	The categories $\mathbf{FFD}$ and $\bf{LCA}$ are dually equivalent.
	\begin{proof}
		We set $pt: \mathbf{FFD}\rightarrow \bf{LCA}$ that assigns to an $FDD$-lattice  the Lawson compact algebraic $L$-domain $pt(L)$, and each $FDD$-lattice homomorphism $f: L\rightarrow M$ the spectral mapping $pt(f): pt(M)\rightarrow pt(L)$ defined as  in Lemma \ref{lca}; Set $CO:  \bf{LCA}\rightarrow \bf{FFD}$ that associates with a Lawson compact algebraic $L$-domain $S$ the $FDD$-lattice $CO(S)$ and a spectral mapping $g: S\rightarrow T$ the $FDD$-lattice homomorphism $CO(g): CO(T)\rightarrow CO(S)$ defined as  in Lemma \ref{fdd}.
		
		Let $f: L\rightarrow M$ and $g: M\rightarrow N$ be two morphisms in $\mathbf{FFD}$. Then for an arbitrary point $p\in pt(N)$,
		\begin{align*}
			pt(g\circ f)(p)  &= p\circ(g\circ f) \\
			&= (p\circ g)\circ f\\
			&= pt(f)(p\circ g)\\
			&= pt(f)\circ pt(g)(p).
		\end{align*}
This reveals that $pt$ preserves the compositions of morphisms in $\mathbf{FFD}$. Similarly, one can easily verify that $pt$ preserves identity morphisms in $\mathbf{FFD}$. Thus $pt$ is a functor.

Suppose $h: S\rightarrow T$ and $k: T\rightarrow Y$ are two morphisms in $\bf{LCA}$. Then for any compact-open subset $U$ of $Y$, 
	\begin{align*}
	CO(k\circ h)(U)  &= (k\circ h)^{-1}(U) \\
	&= h^{-1}(k^{-1}(U))\\
	&= CO(h) (CO(k)(U))\\
	&= CO(h)\circ CO(k)(U).
\end{align*}
	 Thus $CO$ preserves the compositions of morphisms in $\bf{LCA}$. It is  trivial to prove that $CO$ preserves the identity morphisms in $\bf{LCA}$. So, $CO$ is a functor.
		
	To prove the equivalence between the two categories $\bf{FFD}$ and $\bf{LCA}$, we proceed to define two functions: one is  $\eta: Id\rightarrow CO\circ pt$ between the identity functor and the $CO\circ pt$ functor, the other is  $\theta: Id\rightarrow pt\circ CO$ between the identity functor and the $pt\circ CO$ functor, in which for any $FDD$-lattice $L$ and any Lawson compact algebraic $L$-domain $M$, $\eta_L$ and $\theta_M$ are given as in Theorem \ref{eqa1} and \ref{plo81}, respectively. Besides, we have proved that  $\eta_L$ and $\theta_M$ are both isomorphisms. Thus, we just need to show that $\eta$ and $\theta$ are natural transformations, that is, for each arrow $f:L\rightarrow M$ in $\bf{FFD}$ and each arrow $g:S\rightarrow T$ in $\bf{LCA}$, the following two diagrams commute.
\begin{figure}[h]
	\centering
	\begin{tikzpicture}[line width=0.6pt,scale=0.7]
		\node[right][font=\scriptsize]  at(-5.5,1)(d)  {\large $L$};
		
		\node[right][font=\scriptsize]  at(-1,1) (f)  {\large $CO\circ pt(L)$};
		\node[right][font=\scriptsize]  at(-5.5,-1.5) (g)  {\large $M$};
		\node[right][font=\scriptsize]  at(-1,-1.5) (h)  {\large $CO\circ pt(M)$};
		\draw[->](d)--(f);
		
		\draw[->](d)--(g);
		\draw[->](f)--(h);
		\draw[->](g)--(h);
		\node[right][font=\scriptsize]  at(-3.2,1.3)  {\large $\eta_L$};
		\node[right][font=\scriptsize]  at(-3.2,-1.2)  {\large $\eta_M$};
		\node[right][font=\scriptsize]  at(-5.1,-0.3)  {\large $f$};
		\node[right][font=\scriptsize]  at(0.5,-0.3)  {\large $CO\circ pt(f)$};
		
			\node[right][font=\scriptsize]  at(4,1)(d)  {\large $S$};
		\node[right][font=\scriptsize]  at(8.5,1) (f)  {\large $pt\circ CO(S)$};
		\node[right][font=\scriptsize]  at(4,-1.5) (g)  {\large $T$};
		\node[right][font=\scriptsize]  at(8.5,-1.5) (h)  {\large $pt\circ CO(T)$};
		\draw[->](d)--(f);
		
		\draw[->](d)--(g);
		\draw[->](f)--(h);
		\draw[->](g)--(h);
		\node[right][font=\scriptsize]  at(6,1.3)  {\large $\theta_S$};
		\node[right][font=\scriptsize]  at(6,-1.2)  {\large $\theta_T$};
		\node[right][font=\scriptsize]  at(4.3,-0.3)  {\large $g$};
		\node[right][font=\scriptsize]  at(10.1,-0.3)  {\large $pt\circ CO(g)$};
		
	\end{tikzpicture}
\end{figure}
	
	For each $x\in L$, $\eta_M\circ f(x) = \{p\in pt(M): p(f(x)) = 1\}$. Meanwhile, 
		\begin{align*}
		CO\circ pt(f)\circ\eta_L(x)  &= pt(f)^{-1}(\eta_L(x)) \\
		&=\{p\in pt(M): pt(f)(p)\in \eta_L(x)\}\\
		&= \{p\in pt(M): p\circ f\in \eta_L(x)\}\\
		&= \{p\in pt(M): p\circ f(x) = 1\}.
	\end{align*}
Then directly from which we can draw the conclusion that $\eta_M\circ f = 	CO\circ pt(f)\circ\eta_L$, i.e. the first diagram commutes.
	
	To verify the commutativity of the second diagram, pick an arbitrary element $s\in S$, $\theta_T\circ g(s) =  \vee^{\uparrow}\{\gamma_{{\uparrow}k}: k\in {\downarrow}g(s)\cap K(T)\}$; $	pt\circ	CO (g)\circ\theta_S(s)  = pt\circ CO(g)(\vee\{\gamma_{{\uparrow}l}: l\in {\downarrow}s	\cap K(S)\})$. In order to prove these two points of $CO(T)$ are identical with each other, assume $U\in CO(T)$, then to show that  $\vee^{\uparrow}\{\gamma_{{\uparrow}k}: k\in {\downarrow}g(s)\cap K(T)\}(U) = pt\circ CO(g)(\vee^{\uparrow}\{\gamma_{{\uparrow}l}: l\in {\downarrow}s	\cap K(S)\})(U)$, in which $\vee^{\uparrow}\{\gamma_{{\uparrow}k}: k\in {\downarrow}g(s)\cap K(T)\}(U) = \vee^{\uparrow}\{\gamma_{{\uparrow}k}(U): k\in {\downarrow}g(s)\cap K(T)\}$ and
		\begin{align*}
pt\circ CO(g)(\vee^{\uparrow}\{\gamma_{{\uparrow}l}: l\in {\downarrow}s\cap K(S)\})(U) &= \vee^{\uparrow}\{\gamma_{{\uparrow}l}: l\in {\downarrow}s\cap K(S)\}\circ CO(g)(U)\\
	&=\vee^{\uparrow}\{\gamma_{{\uparrow}l} (g^{-1}(U)): l\in {\downarrow}s\cap K(S)\}.
	\end{align*}
Thus it suffices to prove that	$\vee^{\uparrow}\{\gamma_{{\uparrow}k}(U): k\in {\downarrow}g(s)\cap K(T)\}= \vee^{\uparrow}\{\gamma_{{\uparrow}l} (g^{-1}(U)): l\in {\downarrow}s\cap K(S)\}$.  If $\vee^{\uparrow}\{\gamma_{{\uparrow}k}(U): k\in {\downarrow}g(s)\cap K(T)\} = 1$, then there exists a $k_0\in {\downarrow}g(s)\cap K(T)$ such that $\gamma_{{\uparrow}k_0}(U) = 1$, which immediately indicates that ${\uparrow}k_0\subseteq U$, that is, $k_0\in U$. Since $k_0\leq g(s)$ and $U$ is an upper bound, $g(s)\in U$, i.e. $s\in g^{-1}(U)$.
 From the facts that  $S$ is a Lawson compact algebraic $L$-domain and $g^{-1}(U)$ is a Scott open subset of $S$, we deduce that  $s = \vee^{\uparrow}({\downarrow}s\cap K(S))\in g^{-1}(U)$. So we can find an $l_0\in {\downarrow}s\cap K(S)$ that belongs to $g^{-1}(U)$, this means  ${\uparrow}l_0\subseteq g^{-1}(U)$ and furthermore, $\gamma_{{\uparrow}l_0}(g^{-1}(U)) = 1$. Hence, $\vee^{\uparrow}\{\gamma_{{\uparrow}l} (g^{-1}(U)): l\in {\downarrow}s\cap K(S)\} = 1$.
 
  If $\vee^{\uparrow}\{\gamma_{{\uparrow}k}(U): k\in {\downarrow}g(s)\cap K(T)\} = 0$, then for all $k\in {\downarrow}g(s)\cap K(T)$, $\gamma_{{\uparrow}k}(U) = 0$. Directly from which we get that ${\uparrow}k\nsubseteq U$, i.e. $k\notin U$ for all $k\in {\downarrow}g(s)\cap K(T)$. Now suppose that there is an $l'\in {\downarrow}s\cap K(S)$ such that $\gamma_{{\uparrow}l'} (g^{-1}(U)) = 1$, that is to say that ${\uparrow}l'\subseteq g^{-1}(U)$, and moreover, $g(l')\in U$. Because $T$ is Lawson compact algebraic $L$-domain, we have that $g(l') = \vee^{\uparrow}({\downarrow}g(l')\cap K(T))\in U$.  By the Scott openness of $U$, there exists a $k_0\in {\downarrow}g(l')\cap K(T)$ satisfying $k_0\in U$. Note that $l'\leq s$, so $g(l')\leq g(s)$. Thus, we find a $k_0\in {\downarrow}g(s)\cap K(T)$ belonging to $U$. This obviously contradicts the result proven earlier that no   $k\in {\downarrow}g(s)\cap K(T)$ is in $U$. So, for any $l\in {\downarrow}s\cap K(S)$, $\gamma_{{\uparrow}l} (g^{-1}(U)) = 0$. This consequently means that $\vee^{\uparrow}\{\gamma_{{\uparrow}l} (g^{-1}(U)): l\in {\downarrow}s\cap K(S)\} = 0$. 
  
  As a conclusion, $\vee^{\uparrow}\{\gamma_{{\uparrow}k}(U): k\in {\downarrow}g(s)\cap K(T)\}= \vee^{\uparrow}\{\gamma_{{\uparrow}l} (g^{-1}(U)): l\in {\downarrow}s\cap K(S)\}$ holds, in other words, the second diagram commutes. And hence, the categories $\bf{FFD}$ and $\bf{LCA}$ are dually equivalent.
	\end{proof}
\end{theorem}

\bibliographystyle{plain}

\end{document}